\documentclass[a4paper]{article}
\pdfoutput=1
\usepackage[utf8]{inputenc}

\usepackage{lmodern}
\usepackage[round]{natbib}
\usepackage{array}
\usepackage{graphicx}
\usepackage{stmaryrd}
\usepackage{subfig}
\usepackage{amssymb, amsfonts, amsthm, amsmath}
\usepackage{enumerate}
\usepackage{enumitem}
\usepackage{bbm}

\usepackage[english]{babel}

\usepackage{tikz}
\usetikzlibrary{decorations, calc}

\usetikzlibrary{decorations.pathreplacing}

\tikzset{individu/.style={draw,thick}}

\numberwithin{equation}{section}

\theoremstyle{plain}
\newtheorem{theorem}{Theorem}[section]

\newtheorem{proposition}[theorem]{Proposition}

\theoremstyle{definition}
\newtheorem{definition}[theorem]{Definition}

\theoremstyle{remark}
\newtheorem{remark}[theorem]{Remark}

\newcommand{\calS}{\mathcal{S}}

\renewcommand{\tilde}[1]{\widetilde{#1}}
\renewcommand{\epsilon}{\varepsilon}
\renewcommand{\phi}{\varphi}

\newcommand{\Addresses}{{
  \bigskip
  \footnotesize

  \textsc{D\'epartement de Math\'ematiques et Applications, \'Ecole normale sup\'erieure, CNRS, PSL University, 45 rue d'Ulm, 75005 Paris, France}\par\nopagebreak
  \textit{E-mail address}: \texttt{sanjay.ramassamy at ens.fr}

}}

\title{The Foata correspondence, cycle lengths and anomalies}
\author{Sanjay Ramassamy}
\date{\today}

\begin{document}

\maketitle

\begin{abstract}
In their study of the densest jammed configurations for theater models, Krapivsky and Luck observe that two classes of permutations have the same cardinalities and ask for a bijection between them. In this note we show that the Foata correspondence provides the desired bijection.
\end{abstract}

\cite{krapivskyluck} introduced the theater model as a variant of directed random sequential adsorption, where spectators sequentially select a seat in a row of $L$ seats, with the constraint that they cannot go past a cluster of $b$ or more consecutive occupied seats. Configurations where all the seats are eventually occupied are induced by permutations $s$ of $\{1,\ldots,L\}$ such that for any $i$ between $1$ and $L$,
one cannot find $b$ consecutive integers $j+1,\ldots,j+b$ with $j+b<i$ and $s(j+k)>s(i)$ for all $k$ between $1$ and $b$. \cite{krapivskyluck} showed that the number $D_L^{(b)}$ of such permutations satisfies a linear recurrence relation which implies that they have the same cardinality as the permutations of $L$ elements with cycles of lengths at most $b$. The authors then asked for a bijective proof of this fact.  The goal of this note is to show that the Foata correspondence (\cite{foata,lothaire}) provides such a bijection. Interestingly enough, the Foata correspondence is already visible in~\cite{renyi}, where it is used to explain the equality of the distributions of records and cycle lengths for permutations. The present note broadens the connection between generalized notions of records and cycle lengths.

In Section~\ref{sec:Foata} we recall the Foata correspondence and in Section~\ref{sec:proof} we show that it provides the desired bijection.

\section{The Foata correspondence}
\label{sec:Foata}

Let $\calS_L$ be the group of permutations of $\{1,\ldots,L\}$. We will represent permutations in $\calS_L$ by words with $L$ distinct letters in $\{1,\ldots,L\}$. Our running example will be $359724681$, which denotes the permutation $s\in\calS_9$ such that $s(1)=3$, $s(2)=5$, $s(3)=9$, etc. The point diagram of a permutation is a plot of the graph of the corresponding function from $\{1,\ldots,L\}$ to itself, see Figure~\ref{fig:diagram} for the point diagram of the above example.

One can associate to every permutation in $\calS_L$ its cycle decomposition. Including the fixed points in that decomposition, the above $s\in\calS_9$ has cycle decomposition $[139][25][476][8]$. This way of writing is however not unique for two reasons:
\begin{itemize}
 \item each cycle of length $d$ can be written in $d$ different ways (one can freely choose what element to put first) ;
 \item if a permutation has $k$ cycles (including singletons corresponding to fixed points) one can have them appear in $k!$ different orders.
\end{itemize}
The Foata correspondence (\cite{foata,lothaire}) describes a canonical choice for writing such a cycle decomposition. Firstly we write every cycle by starting by its maximal element. We call the maximal element of a cycle the \emph{cycle head}. In the above example, $[139]$ is written as $[913]$, $[25]$ is written as $[52]$ and $[476]$ is written as $[764]$. Secondly, we write the cycles in increasing order of their cycle heads. In the above example, we obtain $[52][764][8][913]$. Removing the brackets, we obtain the word $527648913$ which can be seen as a permutation. The Foata correspondence associates to any permutation $s\in\calS_L$ the permutation $F(s)$ obtained by writing the cycle decomposition of $s$ in the above way and removing the brackets.

\begin{figure}[htbp]
\centering
\includegraphics[width=2.3in]{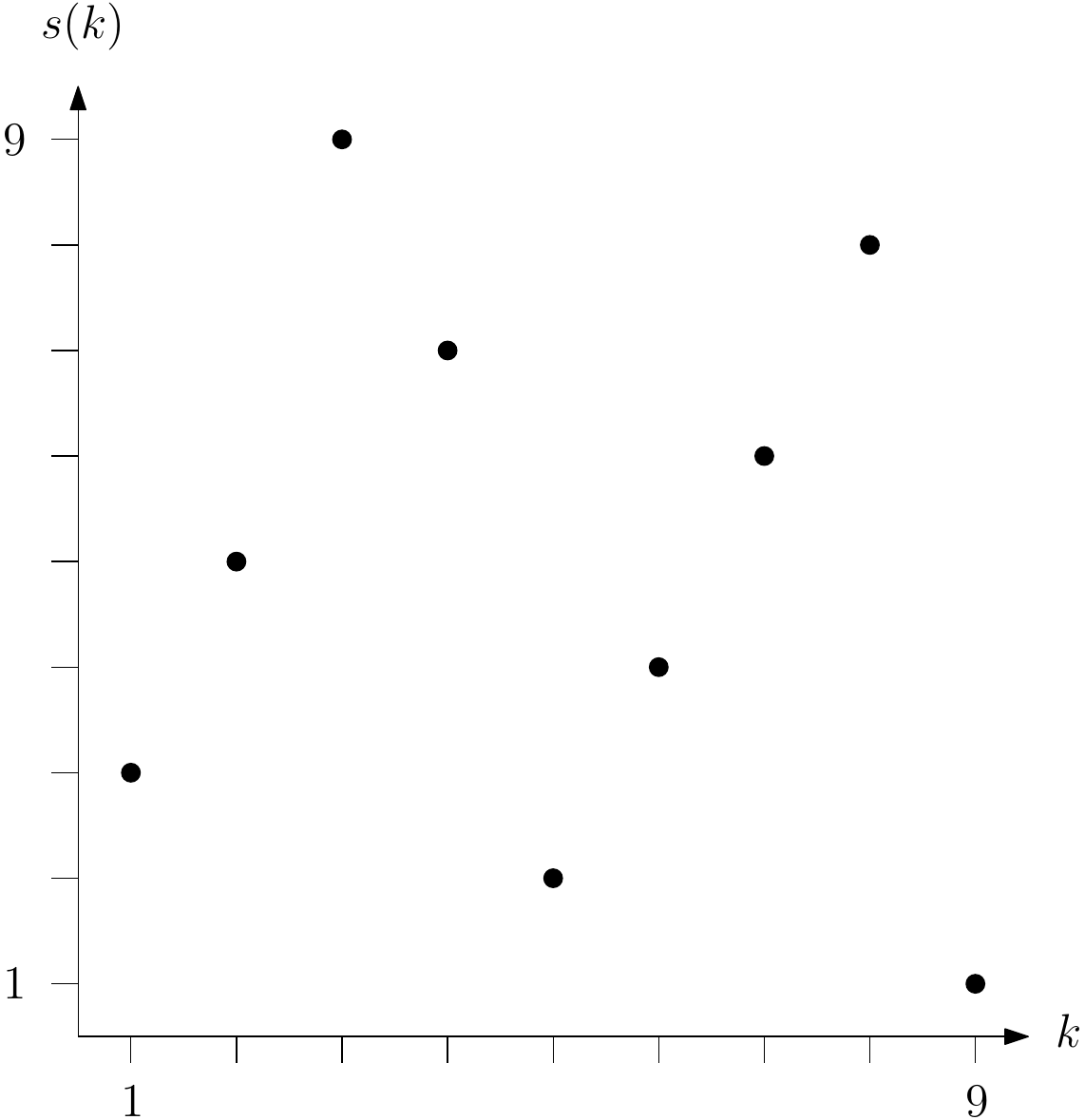}
\caption{The point diagram of the permutation $359724681$. Its cycle decomposition may be written as $[139][25][476][8]$. Writing each cycle starting by its largest element (the cycle head) and ordering the cycles in increasing order of the cycle heads, we obtain $[52][764][8][913]$. Removing the brackets, we obtain the permutation $527648913$, which is the image of $359724681$ under the Foata correspondence.}
\label{fig:diagram}
\end{figure}

\begin{remark}
\label{rem:cyclehead}
The largest letter to the left (or at the position) of a given letter $a$ in $F(s)$ corresponds to the cycle head of the cycle to which $a$ belongs in the cycle decomposition of $s$. This observation will be used later.
\end{remark}

\section{Cycle lengths and $b$-anomalies}
\label{sec:proof}

\begin{definition}
Let $b\geq1$ be an integer. Let $a_1\cdots a_L$ denote a permutation in $\calS_L$, with $L\geq1$. A consecutive subword $a_{i+1}\cdots a_{i+b}$ is called a \emph{$b$-anomaly} if there exists $1 \leq j \leq i$ such that $a_j>\max(a_{i+1},\ldots,a_{i+b})$.
\end{definition}

In terms of the point diagram, a $b$-anomaly corresponds to $b$ points with consecutive abscissae for which one can find a point strictly above and to the left of all the $b$ points. In the example of Figure~\ref{fig:diagram}, $246$ is a $3$-anomaly because $9$ is to its left and greater than $2$, $4$ and $6$.

The following result relates the cycle lengths of a permutation $s$ to the $b$-anomalies of its image $F(s)$ under the Foata correspondence.

\begin{proposition}
\label{prop:main}
Let $b\geq1$, $L\geq1$ and $s\in\calS_L$. Then $s$ has a cycle of length at least $b+1$ if and only if $F(s)$ has a $b$-anomaly.
\end{proposition}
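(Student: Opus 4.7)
The plan is to use the structural description of $F(s)$ as the concatenation of cycles (each started with its head) ordered by increasing heads, together with Remark~\ref{rem:cyclehead}. Both directions follow by observing that a cycle of $s$ corresponds to a consecutive block in $F(s)$ whose first letter dominates all the others.

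For the forward direction, suppose $s$ has a cycle $C$ of length $\ell\geq b+1$, with cycle head $h$. In $F(s)$, the cycle $C$ appears as a consecutive block $h\,c_1c_2\cdots c_{\ell-1}$, and by definition of the head we have $c_k<h$ for every $k$. Write the position of $h$ in $F(s)$ as $i$ and consider the consecutive subword $a_{i+1}\cdots a_{i+b}=c_1\cdots c_b$ (this is possible since $\ell-1\geq b$). Taking $j=i$ gives $a_j=h>\max(a_{i+1},\ldots,a_{i+b})$, so $c_1\cdots c_b$ is a $b$-anomaly.

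For the converse, suppose $F(s)$ has a $b$-anomaly $a_{i+1}\cdots a_{i+b}$, with witness $a_j$ at some position $j\leq i$ satisfying $a_j>\max(a_{i+1},\ldots,a_{i+b})$. In particular $a_j$ is strictly greater than $a_{i+b}$, so the largest letter at or to the left of $a_{i+b}$ sits at some position $p\leq i$. By Remark~\ref{rem:cyclehead}, this letter is the cycle head $H$ of the cycle of $s$ containing $a_{i+b}$. Since each cycle occupies a consecutive block of positions in $F(s)$ starting at its head, this cycle occupies positions $p,p+1,\ldots,q$ for some $q\geq i+b$ (so as to include the position $i+b$ of $a_{i+b}$). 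Its length is therefore
\[
q-p+1 \geq (i+b)-i+1 = b+1,
\]
as desired.

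I expect no significant obstacle: the content of Remark~\ref{rem:cyclehead} already encodes the bijective link between positional maxima in $F(s)$ and cycle heads in $s$, and the proposition is essentially a direct translation of this fact. The only subtlety to watch is making sure in the converse that the cycle containing $a_{i+b}$ necessarily starts strictly before position $i+1$, which is precisely what the anomaly witness $a_j$ guarantees via the remark.
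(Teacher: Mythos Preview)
Your proof is correct and follows essentially the same approach as the paper: the forward direction is identical, and in the converse both you and the paper invoke Remark~\ref{rem:cyclehead} to locate the cycle head and conclude that the relevant cycle has at least $b+1$ elements. One small phrasing slip: to deduce $p\leq i$ you need the full hypothesis $a_j>\max(a_{i+1},\ldots,a_{i+b})$ (which you had just stated), not merely $a_j>a_{i+b}$, since the latter alone would only rule out $p=i+b$.
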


\begin{proof}
Assume $s$ has a cycle of length $d\geq b+1$. We write it $[c_1c_2\cdots c_d]$ with $c_1$ being the cycle head, that is, the largest element of the cycle. Then the subword $c_2\cdots c_d$ of $F(s)$ forms a $(d-1)$-anomaly. Any consecutive subword of length $b$ of this $(d-1)$-anomaly provides a $b$-anomaly for $F(s)$.

Conversely, assume $F(s)=a_1\cdots a_L$ has a $b$-anomaly $a_{i+1}\cdots a_{i+b}$. By definition of the $b$-anomaly, the set
\[
X_i^s:=\left\{j \leq i | a_j> \max(a_{i+1},\ldots,a_{i+b})\right\}
\]
is non-empty, so $\max_{j\in X_i^s}{a_j}$ is well-defined and equal to some $a_h$. Then by Remark~\ref{rem:cyclehead}, in the cycle decomposition of $s$, $a_h$ is the head of the cycle to which each $a_{i+k}$ with $1 \leq k \leq b$ belongs, so there are at least $b+1$ elements in that cycle in $s$.
\end{proof}

As a consequence of Proposition~\ref{prop:main}, in order to obtain the bijection requested by~\cite{krapivskyluck}, it suffices to compose the Foata correspondence with the involution sending every permutation $s\in\calS_L$ to $\tilde{s}\in\calS_L$ defined by $\tilde{s}(i)=L+1-s(L+1-i)$ for every $i$, whereby the point diagram is rotated by $180$ degrees.

\section*{Acknowledgements}

The author thanks Jean-Marc Luck for introducing him to the theater model, the Institut de Physique Th\'eorique for the hospitality during several visits and the Fondation Sciences Math\'ematiques de Paris for the support.

\label{Bibliography}
\bibliographystyle{plainnat}
\bibliography{bibliographie}

\begin{thebibliography}{4}
\providecommand{\natexlab}[1]{#1}
\providecommand{\url}[1]{\texttt{#1}}
\expandafter\ifx\csname urlstyle\endcsname\relax
  \providecommand{\doi}[1]{doi: #1}\else
  \providecommand{\doi}{doi: \begingroup \urlstyle{rm}\Url}\fi

\bibitem[Foata(1968)]{foata}
Dominique Foata.
\newblock On the {N}etto inversion number of a sequence.
\newblock \emph{Proc. Amer. Math. Soc.}, 19:\penalty0 236--240, 1968.

\bibitem[Krapivsky and Luck(2019)]{krapivskyluck}
Pavel Krapivsky and Jean-Marc Luck.
\newblock Coverage fluctuations in theater models.
\newblock \emph{J. Stat. Mech.}, 2019\penalty0 (6):\penalty0 063209, 2019.

\bibitem[Lothaire(1983)]{lothaire}
M.~Lothaire.
\newblock \emph{Combinatorics on words}, volume~17 of \emph{Encyclopedia of
  Mathematics and its Applications}.
\newblock Addison-Wesley Publishing Co., Reading, Mass., 1983.

\bibitem[R\'{e}nyi(1962)]{renyi}
Alfr\'{e}d R\'{e}nyi.
\newblock Th\'{e}orie des \'{e}l\'{e}ments saillants d'une suite
  d'observations.
\newblock \emph{Ann. Fac. Sci. Univ. Clermont-Ferrand No.}, 8:\penalty0 7--13,
  1962.

\end{thebibliography}
\Addresses
\end{document}